\newtheorem{lem}{Lemma}[section]
\newtheorem{thm}{Theorem}[section]
\newtheorem{remark}{Remark}[section]
\newtheorem{corr}{Corollary}[section]
\journal{arXiv}
\begin{document}

\begin{frontmatter}

\title{A new type of the Gronwall-Bellman inequality and its application to fractional stochastic differential equations}
%\tnotetext[mytitlenote]{Fully documented templates are available in the elsarticle package on %\href{http://www.ctan.org/tex-archive/macros/latex/contrib/elsarticle}{CTAN}.}

%% Group authors per affiliation:
%\author{Qiong Wu}
%\address{Tufts University}
%\fntext[myfootnote]{Since 1880.}

%% or include affiliations in footnotes:
%\author[mymainaddress,mysecondaryaddress]{Qiong Wu}
%\ead[url]{www.elsevier.com}

\author{Qiong Wu\corref{mycorrespondingauthor}}
\cortext[mycorrespondingauthor]{Corresponding author}
\ead{Qiong.Wu@tufts.edu}

%\address[mymainaddress]{1600 John F Kennedy Boulevard, Philadelphia}
\address{Tufts University, \\Department of Mathematics, 503 Boston Avenue, Medford, MA 02155, USA.}

\begin{abstract}
This paper presents a new type of Gronwall-Bellman inequality, which arises from a class of integral equations with a mixture of nonsingular and singular integrals. The new idea is to use a binomial function to combine the known Gronwall-Bellman inequalities for integral equations having nonsingular integrals with those having singular integrals. Based on this new type of Gronwall-Bellman inequality, we investigate the existence and uniqueness of the solution to a fractional stochastic differential equation (SDE) with fractional order~$0 < \alpha < 1$. This result generalizes the existence and uniqueness theorem related to fractional order $\frac{1}{2} < \alpha < 1$ appearing in \cite{pedjeu2012stochastic}. Finally, the fractional type Fokker-Planck-Kolmogorov equation associated to the solution of the fractional SDE is derived using It\^o's formula.
\end{abstract}

\begin{keyword}
	Gronwall-Bellman inequality; Fractional stochastic differential equations (SDEs); Existence and uniqueness; fractional Fokker-Planck equation.
\end{keyword}
\end{frontmatter}

\linenumbers
\section{Introduction}
It is well known that integral inequalities are instrumental in studying the qualitative analysis of solutions to differential and integral equations~\cite{ames1997inequalities}. Among these inequalities, the distinguished Gronwall-Bellman type inequality from~\cite{bellman1963differential}, and its associated extensions,~\cite{pachpatte1975some, lipovan2000retarded, agarwal2005generalization, mao1989lebesgue}, are capable of affording explicit bounds on solutions of a class of linear differential equations with integer order. The following lemma concerns a standard Gronwall-Bellman inequality in~\cite{corduneanu2008principles} for a differential equation with order one or equivalently an integral equation with nonsingular integrals.
\begin{lem}\label{stadgronwall}
	Suppose $h(t)$, $k(t)$ and $x(t)$ are continuous functions on $t_{0} \leq t < T, 0 < T\leq \infty,$ with $k(t) \geq 0$. If $x(t)$ satisfies
	\begin{eqnarray*}
		x(t) \leq h(t) + \int_{t_{0}}^{t}k(s)x(s)\mathrm{d}s,
	\end{eqnarray*} 
	then
	\begin{eqnarray*}
	x(t) \leq h(t) + \int_{t_{0}}^{t}h(s)k(s)\exp\Bigg [\int_{s}^{t}k(u)\mathrm{d}u\Bigg ]\mathrm{d}s.
	\end{eqnarray*}
Moreover, if $h(t)$ is nondecreasing, then
	\begin{eqnarray*}
		x(t) \leq h(t)\exp\Bigg [\int_{t_{0}}^{t}k(s)\mathrm{d}s\Bigg ].
	\end{eqnarray*}
\end{lem}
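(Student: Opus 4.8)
The plan is to reduce the self-referential integral inequality to a linear differential inequality for the accumulated integral term, and then solve it with the classical integrating-factor technique. First I would set $y(t) := \int_{t_0}^{t} k(s) x(s)\,\mathrm{d}s$, so that $y(t_0)=0$ and, since $k$ and $x$ are continuous, $k x$ is continuous and $y$ is differentiable with $y'(t)=k(t)x(t)$ by the fundamental theorem of calculus. The hypothesis then reads $x(t)\le h(t)+y(t)$, and using $k(t)\ge 0$ together with this bound gives $y'(t)=k(t)x(t)\le k(t)h(t)+k(t)y(t)$, i.e. the linear differential inequality $y'(t)-k(t)y(t)\le k(t)h(t)$.

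Next I would introduce the integrating factor $\mu(t):=\exp\!\big[-\int_{t_0}^{t}k(u)\,\mathrm{d}u\big]$, which is strictly positive and satisfies $\mu'(t)=-k(t)\mu(t)$. Multiplying the differential inequality by $\mu(t)>0$ preserves its direction and yields $\frac{\mathrm{d}}{\mathrm{d}t}\big[\mu(t)y(t)\big]=\mu(t)\big(y'(t)-k(t)y(t)\big)\le \mu(t)k(t)h(t)$. Integrating from $t_0$ to $t$ and using $y(t_0)=0$ gives $\mu(t)y(t)\le\int_{t_0}^{t}\mu(s)k(s)h(s)\,\mathrm{d}s$. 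Dividing by $\mu(t)$ and recognizing that $\mu(s)/\mu(t)=\exp\!\big[\int_{s}^{t}k(u)\,\mathrm{d}u\big]$ produces $y(t)\le\int_{t_0}^{t}h(s)k(s)\exp\!\big[\int_{s}^{t}k(u)\,\mathrm{d}u\big]\mathrm{d}s$. Substituting back into $x(t)\le h(t)+y(t)$ delivers the first asserted bound.

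For the second statement, if $h$ is nondecreasing then $h(s)\le h(t)$ for all $t_0\le s\le t$, so the integral in the first bound is majorized by $h(t)\int_{t_0}^{t}k(s)\exp\!\big[\int_{s}^{t}k(u)\,\mathrm{d}u\big]\mathrm{d}s$. I would evaluate this integral in closed form by setting $G(s):=\int_{s}^{t}k(u)\,\mathrm{d}u$, so that $G'(s)=-k(s)$ and the integrand equals $-\frac{\mathrm{d}}{\mathrm{d}s}\exp[G(s)]$; the integral therefore telescopes to $\exp[G(t_0)]-\exp[G(t)]=\exp\!\big[\int_{t_0}^{t}k(u)\,\mathrm{d}u\big]-1$. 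Combining this with the leading $h(t)$ term and factoring gives exactly $x(t)\le h(t)\exp\!\big[\int_{t_0}^{t}k(s)\,\mathrm{d}s\big]$.

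I do not expect a serious obstacle here. The only points requiring care are verifying the differentiability of $y$ and the positivity of $\mu$ (so that the inequality direction is preserved when multiplying), and correctly identifying the integrand in the monotone case as an exact derivative. The essential device — converting the implicit integral bound into a solvable linear ODE inequality — is what renders every remaining step routine.
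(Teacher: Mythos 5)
Your proof is correct. Note that the paper itself offers no proof of this lemma: it is quoted as a known result from the cited reference (Corduneanu's \emph{Principles of Differential and Integral Equations}), so there is no in-paper argument to compare against. What you give is the classical integrating-factor proof: reducing the integral inequality to the linear differential inequality $y'(t) - k(t)y(t) \leq k(t)h(t)$ for $y(t) = \int_{t_0}^{t} k(s)x(s)\,\mathrm{d}s$, multiplying by $\mu(t) = \exp\bigl[-\int_{t_0}^{t} k(u)\,\mathrm{d}u\bigr] > 0$, and integrating. Each step checks out, including the two points that genuinely require care: the inequality $k(t)x(t) \leq k(t)h(t) + k(t)y(t)$ uses $k \geq 0$, and the monotone case correctly exploits that the integrand $k(s)\exp\bigl[\int_{s}^{t} k(u)\,\mathrm{d}u\bigr]$ is an exact derivative in $s$, so the integral telescopes to $\exp\bigl[\int_{t_0}^{t} k(u)\,\mathrm{d}u\bigr] - 1$ and the leading $h(t)$ term is absorbed upon factoring. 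One could alternatively prove the lemma by iterating the operator $x \mapsto h + \int_{t_0}^{t} k x$ and summing the resulting series, which is the style of argument the paper actually uses for its own Theorem 2.1; your ODE route is shorter but less indicative of how the paper's generalization proceeds, since the integrating-factor trick has no analogue when a singular kernel $(t-s)^{\alpha-1}$ is present.
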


In order to investigate the qualitative properties of solutions to  differential equations of fractional order, there are several generalizations of Gronwall-Bellman inequalities developed by many researchers~\cite{ye2007generalized, zheng2013some, lazarevic2009finite, atici2012gronwall}. Let's recall the following generalized Gronwall-Bellman inequality proposed in~\cite{ye2007generalized} for a fractional differential equation with order $\beta > 0$ or equivalently an integral equation with singular integrals.
\begin{lem}\label{fracgronwall}
	Suppose $\beta > 0$, $a(t)$ is a nonnegative function which is locally integrable on $0 \leq t < T, 0 < T\leq \infty$, and $g(t)$ is a nonnegative, nondecreasing continuous function defined on $0 \leq t < T$ with $g(t) \leq M~(constant)$. If $u(t)$ is nonnegative and locally integrable on $0 \leq t < T $ with
	\begin{eqnarray*}
		u(t) \leq a(t) + g(t)\int_{0}^{t}(t -s)^{\beta - 1}u(s)\mathrm{d}s
	\end{eqnarray*} 
	on this interval, then
	\begin{eqnarray*}
	u(t) \leq a(t) + \int_{0}^{t}\Bigg [ \sum_{n = 1}^{\infty} \frac{(g(t)\Gamma(\beta))^{n}}{\Gamma(n\beta)}(t - s)^{\{n\beta - 1\}}a(s)\Bigg]\mathrm{d}s,
	\end{eqnarray*}
where $\Gamma(t)$ is the gamma function. Furthermore, if $a(t)$ is nondecreasing on $0 \leq t < T$, then
	\begin{eqnarray*}
		u(t) \leq a(t)E_{\beta}(g(t)\Gamma(\beta)t^{\beta}),
	\end{eqnarray*} 
where $E_{\beta}(z)$ is the Mittag-Leffler function defined by $E_{\beta}(z) = \sum_{k=0}^{\infty}\frac{z^{k}}{\Gamma(k\beta +1)}$ for $z > 0$.
\end{lem}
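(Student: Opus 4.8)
The plan is to prove this by a Picard-style iteration combined with the substitution identity for the singular kernel.The plan is to prove this by iterating the hypothesized inequality and identifying the limit of the iterates with the claimed series. Define the linear operator
\begin{eqnarray*}
(B\phi)(t) = g(t)\int_{0}^{t}(t-s)^{\beta-1}\phi(s)\,\mathrm{d}s,
\end{eqnarray*}
so that the assumption reads $u \leq a + Bu$. Since $B$ maps nonnegative functions to nonnegative functions, substituting the inequality into itself repeatedly yields $u \leq \sum_{k=0}^{n-1}B^{k}a + B^{n}u$ for every $n \geq 1$. The strategy is then twofold: compute $B^{k}a$ explicitly, and show that the remainder $B^{n}u$ vanishes as $n \to \infty$.

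The central computation is the claim that, for every nonnegative locally integrable $\phi$,
\begin{eqnarray*}
(B^{n}\phi)(t) \leq \frac{\big(g(t)\Gamma(\beta)\big)^{n}}{\Gamma(n\beta)}\int_{0}^{t}(t-s)^{n\beta-1}\phi(s)\,\mathrm{d}s,
\end{eqnarray*}
which I would establish by induction on $n$. The inductive step combines three ingredients. First, because $g$ is nondecreasing, in the inner application of $B$ one has $g(\tau) \leq g(t)$ for $\tau \leq t$, which lets me pull out a clean factor of $g(t)^{n+1}$; this is exactly where the monotonicity hypothesis on $g$ earns its keep. Second, after interchanging the order of integration (justified by nonnegativity via Tonelli), the inner integral becomes the Beta integral $\int_{s}^{t}(t-\tau)^{\beta-1}(\tau-s)^{n\beta-1}\,\mathrm{d}\tau = (t-s)^{(n+1)\beta-1}\,\Gamma(\beta)\Gamma(n\beta)/\Gamma((n+1)\beta)$. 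Third, collecting the Gamma factors telescopes the constant into the asserted form with $n$ replaced by $n+1$.

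I expect the vanishing of the remainder to be the main obstacle requiring care. Using the claim with $\phi = u$, bounding $g(t) \leq M$, and using $(t-s)^{n\beta-1} \leq t^{n\beta-1}$ once $n\beta \geq 1$, I obtain $(B^{n}u)(t) \leq t^{-1}\,\big(M\Gamma(\beta)t^{\beta}\big)^{n}/\Gamma(n\beta)\cdot\int_{0}^{t}u(s)\,\mathrm{d}s$. The factor $\big(M\Gamma(\beta)t^{\beta}\big)^{n}/\Gamma(n\beta)$ is the general term of an everywhere-convergent Mittag-Leffler-type series, so it tends to $0$ by the ratio test, using that $\Gamma(n\beta)$ grows faster than any geometric sequence. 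Hence $B^{n}u(t) \to 0$ pointwise on $[0,T)$, and letting $n \to \infty$ gives $u(t) \leq \sum_{k=0}^{\infty}(B^{k}a)(t)$.

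Finally I would read off the two conclusions. Substituting the explicit form of $B^{k}a$ and interchanging sum and integral by monotone convergence yields the first displayed bound. For the second, when $a$ is nondecreasing I replace $a(s)$ by $a(t)$ inside the integral, evaluate $\int_{0}^{t}(t-s)^{k\beta-1}\,\mathrm{d}s = t^{k\beta}/(k\beta)$ term by term, and use $k\beta\,\Gamma(k\beta) = \Gamma(k\beta+1)$ to recognise the resulting series as $E_{\beta}(g(t)\Gamma(\beta)t^{\beta}) - 1$; adding back the isolated $a(t)$ gives $u(t) \leq a(t)E_{\beta}(g(t)\Gamma(\beta)t^{\beta})$, completing the proof.
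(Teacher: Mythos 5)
Your proof is correct and takes essentially the same approach as the paper: although the paper states this lemma without proof (citing it from the literature), your argument --- iterating $u \leq a + Bu$, establishing the bound $(B^{n}\phi)(t) \leq \frac{(g(t)\Gamma(\beta))^{n}}{\Gamma(n\beta)}\int_{0}^{t}(t-s)^{n\beta-1}\phi(s)\,\mathrm{d}s$ by induction via the Beta integral, showing the remainder vanishes through the Mittag-Leffler series term, and summing --- is exactly the strategy the paper itself deploys for its Theorem~\ref{maingrownineq}, of which this lemma is the $b \equiv 0$ special case. Your treatment of the monotonicity of $g$, the Tonelli interchange, and the final identification $n\beta\,\Gamma(n\beta) = \Gamma(n\beta+1)$ are all sound, so there is nothing to fix.
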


From many real applications, such as in physics, theoretical biology, and mathematical finance, there is substantial interest in a class of fractional SDEs~\cite{mandelbrot1968fractional, jumarie2005solution, pedjeu2012stochastic}. The fractional SDEs take the form
\begin{eqnarray}\label{fraceq1}
	\mathrm{d}x(t) = b(t,x(t))\mathrm{d}t + \sigma_{1}(t, x(t))\mathrm{d}t^{\alpha} + \sigma_{2}(t, x(t))\mathrm{d}B_{t},
\end{eqnarray}
where the initial value is $x(0) = x_{0}$, $0 < \alpha < 1$, and $B_{t}$ is the standard Brownian motion. According to~\cite{jumarie2005solution, pedjeu2012stochastic}, the integral equation corresponding to Eq.~\eqref{fraceq1} is
\begin{eqnarray}\label{fracinteg}
x(t) = x_{0} + \int_{0}^{t}b(s, x(s))\mathrm{d}s + \alpha\int_{0}^{t}(t - s)^{\alpha - 1}\sigma_{1}(s, x(s))\mathrm{d}s + \int_{0}^{t}\sigma_{2}(s, x(s))\mathrm{d}B_{s}. 
\end{eqnarray}
Since $0 < \alpha < 1$, there are nonsingular and singular  integrals in the integral equation Eq.~\eqref{fracinteg}. However, the above mentioned types of Gronwall-Bellman inequalities, such as Lemmas~\ref{stadgronwall} and~\ref{fracgronwall}, are not applicable to studying the qualitative properties of the solution to Eq.~\eqref{fraceq1} or~Eq.\eqref{fracinteg}.  

The first goal of this paper, presented in Section~\ref{mainineq}, is to derive a new type of Gronwall-Bellman inequality which is applicable to study the qualitative behaviors of the solution to the fractional SDE Eq.~\eqref{fraceq1} or the stochastic integral equation Eq.~\eqref{fracinteg}. The second goal, accomplished in Section~\ref{application}, is to apply the results from Section~\ref{mainineq} to investigate existence and uniqueness of the solution to the fractional SDE Eq.~\eqref{fraceq1} of order $0 < \alpha < 1$. Finally, in Section~\ref{fpe}, a fractional type Fokker-Planck-Kolmogorov equation associated to the solution of the fractional SDE Eq.~\eqref{fraceq1} is derived.
     
\section{Generalization of the Gronwall-Bellman Inequality}\label{mainineq}
In this section, we develop a new integral inequality, Eq.~\eqref{fracgrowineq} below, by verifying three claims. The first claim is established by using the method of induction and taking advantage of the binomial function; the second claim is verified by taking advantage of properties of the Gamma function; the third claim is verified by employing Gamma functions, Mittag-Leffler functions and exponential functions.  The established integral inequality is applicable to the fractional SDE Eq.\eqref{fraceq1} or the stochastic integral equation Eq.~\eqref{fracinteg}. Also this new integral inequality can be considered as a generalization of the integral inequalities in Lemmas~\ref{stadgronwall} and~\ref{fracgronwall}. 
\begin{thm}\label{maingrownineq}
Let $0 < \alpha < 1$ and consider the time interval $I = [0, T)$, where $T\leq~\infty$. Suppose $a(t)$ is a nonnegative function, which is locally integrable on $I$ and $b(t)$ and $g(t)$ are nonnegative, nondecreasing continuous function defined on $I$, with both bounded by a positive constant,~$M$. If $u(t)$ is nonnegative, and locally integrable on $I$ and satisfies
	\begin{eqnarray}\label{condforineq}
		u(t) \leq a(t) + b(t)\int_{0}^{t}u(s)\mathrm{d}s + g(t)\int_{0}^{t}(t -s)^{\alpha - 1}u(s)\mathrm{d}s,
	\end{eqnarray} 
then
	\begin{eqnarray}\label{fracgrowineq}
		u(t) \leq a(t) + \sum_{n = 1}^{\infty}\sum_{i=0}^{n} \dbinom{n}{i}b^{n - i}(t)g^{i}(t)\frac{[\Gamma(\alpha)]^{i}}{\Gamma(i\alpha + n - i)}\int_{0}^{t}(t - s)^{\{i\alpha - (i+1-n)\}}a(s)\mathrm{d}s.
	\end{eqnarray}
\end{thm}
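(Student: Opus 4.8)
The plan is to treat the right-hand side of Eq.~\eqref{condforineq} as an iterable operator and to expand its powers through a binomial-type scheme. Define, for any nonnegative locally integrable $\phi$ on $I$,
\[
B\phi(t) = b(t)\int_{0}^{t}\phi(s)\,\mathrm{d}s + g(t)\int_{0}^{t}(t-s)^{\alpha-1}\phi(s)\,\mathrm{d}s,
\]
so that the hypothesis reads $u \leq a + Bu$. Since the kernels are nonnegative, $B$ is monotone on nonnegative functions, and iterating the relation $u \leq a + Bu$ gives, by a straightforward induction on $n$,
\[
u(t) \leq \sum_{k=0}^{n-1} B^{k}a(t) + B^{n}u(t).
\]
The whole argument then reduces to two tasks: finding a closed-form upper bound for the iterates $B^{k}a$, and showing that the remainder $B^{n}u$ vanishes as $n\to\infty$.

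First I would establish the central claim that
\[
B^{n}a(t) \leq \sum_{i=0}^{n}\dbinom{n}{i}b^{n-i}(t)g^{i}(t)\frac{[\Gamma(\alpha)]^{i}}{\Gamma(i\alpha+n-i)}\int_{0}^{t}(t-s)^{i\alpha-(i+1-n)}a(s)\,\mathrm{d}s
\]
by induction on $n$. The crucial simplification is that although $B = P + Q$ with $P\phi(t) = b(t)\int_{0}^{t}\phi$ and $Q\phi(t) = g(t)\int_{0}^{t}(t-s)^{\alpha-1}\phi$ do not commute (the coefficients are time dependent), the functions $b$ and $g$ are nondecreasing. Hence in any composition every interior factor $b(s)$ or $g(s)$ with $s \leq t$ may be bounded by $b(t)$ or $g(t)$ and pulled outside the integrals, which produces the product $b^{n-i}(t)g^{i}(t)$ together with the count $\dbinom{n}{i}$ of how many of the $n$ operators are singular. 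Once the coefficients are removed, the remaining pure integral operators do commute: $\int_{0}^{t}(t-s)^{\alpha-1}(\cdot)\,\mathrm{d}s$ is $\Gamma(\alpha)$ times the Riemann--Liouville fractional integral $I^{\alpha}$, while $\int_{0}^{t}(\cdot)\,\mathrm{d}s = I^{1}$, so a composition with $i$ singular and $n-i$ nonsingular factors collapses via the semigroup law $I^{\mu}I^{\nu}=I^{\mu+\nu}$ to $[\Gamma(\alpha)]^{i}I^{i\alpha+(n-i)}$, whose explicit kernel is exactly $(t-s)^{i\alpha-(i+1-n)}/\Gamma(i\alpha+n-i)$. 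In the inductive step, the Beta-integral identity $\int_{s}^{t}(t-r)^{x-1}(r-s)^{y-1}\,\mathrm{d}r = (t-s)^{x+y-1}\Gamma(x)\Gamma(y)/\Gamma(x+y)$ advances the exponents and assembles the Gamma factors, and Pascal's rule $\dbinom{n}{i}=\dbinom{n-1}{i}+\dbinom{n-1}{i-1}$ regenerates the binomial coefficient at level $n$.

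Next I would control the remainder. The bound just described was derived using only nonnegativity and local integrability of the base function, so the same estimate holds with $a$ replaced by $u$; combining it with $b,g \leq M$ yields
\[
B^{n}u(t) \leq \sum_{i=0}^{n}\dbinom{n}{i}M^{n}[\Gamma(\alpha)]^{i}\frac{1}{\Gamma(i\alpha+n-i)}\int_{0}^{t}(t-s)^{i\alpha+n-i-1}u(s)\,\mathrm{d}s.
\]
For fixed $t\in I$ and $n$ large the exponent is positive, so $(t-s)^{i\alpha+n-i-1}\leq t^{i\alpha+n-i-1}$ and each term is bounded by $\dbinom{n}{i}M^{n}[\Gamma(\alpha)]^{i}t^{i\alpha+n-i-1}\,\|u\|_{L^{1}([0,t])}/\Gamma(i\alpha+n-i)$. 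The resulting double series over $n\geq 1$, $0\leq i\leq n$ converges, since it is majorized by a finite combination of Mittag--Leffler and exponential functions of an argument depending on $M$, $\alpha$ and $t$; hence its general term tends to $0$ and $B^{n}u(t)\to 0$ on $I$. Passing to the limit $n\to\infty$ in the iterated inequality then produces Eq.~\eqref{fracgrowineq}.

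The step I expect to be the main obstacle is the inductive proof of the closed form for $B^{n}a$. The difficulty is analytic--combinatorial rather than conceptual: because $P$ and $Q$ do not commute as written, I must argue carefully that the monotonicity of $b$ and $g$ legitimately decouples the coefficients from the operators before the fractional-integration semigroup can be invoked, and then verify that the Gamma-function bookkeeping together with Pascal's rule reproduces the precise binomial sum at every stage. By comparison, the convergence of the remainder is routine once the bounds $b,g\leq M$ and the local integrability of $u$ are in hand.
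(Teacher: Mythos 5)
Your proposal is correct and follows essentially the same route as the paper: the same operator $B$, the same iterated inequality $u \leq \sum_{k=0}^{n-1}B^{k}a + B^{n}u$, the same binomial closed form for the iterates proved by induction (monotonicity of $b,g$ to pull coefficients out, the Beta integral to merge kernels, Pascal's rule to rebuild $\binom{n}{i}$), and the same Mittag--Leffler-times-exponential majorization of the double series. The only cosmetic difference is that you kill the remainder $B^{n}u$ by noting that the convergent majorizing series has terms tending to zero, whereas the paper argues this separately via monotonicity of the Gamma function and the growth of $\Gamma(n\alpha)$, keeping the series-convergence argument (which hinges on the estimate $\binom{n}{i}[(i\alpha+n-i)\cdots(i\alpha+1)]^{-1}\leq \alpha^{-(n-i)}/(n-i)!$ that you label routine) as a third, independent claim.
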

\begin{proof}
	Let $\phi$ be a locally integrable function and define an operator $B$ on $\phi$ as follows
	\begin{eqnarray*}
		B\phi(t) := b(t)\int_{0}^{t}\phi(s)\mathrm{d}s + g(t)\int_{0}^{t}(t - s)^{\alpha - 1}\phi(s)\mathrm{d}s, ~~~t\geq 0.
	\end{eqnarray*} 
 From the inequality, Eq.~\eqref{condforineq},
	\begin{eqnarray*}
		u(t) \leq a(t) + Bu(t).
	\end{eqnarray*} 
	This implies
	\begin{eqnarray}\label{iterineq}
		u(t) \leq \sum_{k = 0}^{n - 1}B^{k}a(t) + B^{n}u(t).
	\end{eqnarray} 
	In order to get the desired inequality, Eq.~\eqref{fracgrowineq}, from Eq.~\eqref{iterineq}, there are three claims to be verified. 
	
	The first claim provides a general bound on $B^{n}(t)$: 
	\begin{eqnarray}\label{guessiter}
		B^{n}u(t) \leq \sum_{i = 0}^{n}\dbinom{n}{i}b^{n - i}(t)g^{i}(t)\frac{[\Gamma(\alpha)]^{i}}{\Gamma(i\alpha + n - i)}\int_{0}^{t}(t - s)^{\{i\alpha - (i+1-n)\}}u(s)\mathrm{d}s.
	\end{eqnarray}
The method of induction will be used to verify the inequality in Eq.~\eqref{guessiter}. First let $n = 1$. Then the inequality in Eq.~\eqref{guessiter} is true. Now, suppose that the inequality, Eq.~\eqref{guessiter}, holds for $n = k$, and then compute $B^{n}$ when $n = k+1$,
	\begin{eqnarray*}
		\begin{aligned}
			B^{k+1}u(t) &= B(B^{k}u(t))\leq b(t)\int_{0}^{t}\sum_{i = 0}^{k}\dbinom{k}{i}b^{k-i}(s)g^{i}(s)\frac{[\Gamma(\alpha)]^{i}}{\Gamma(i\alpha + k - i)}\int_{0}^{s}(s - \tau)^{\{i\alpha - (i+1-k)\}}u(\tau)\mathrm{d}\tau\mathrm{d}s\\
			&+ g(t)\int_{0}^{t}(t - s)^{\alpha - 1}\sum_{i = 0}^{k}\dbinom{k}{i}b^{k-i}(s)g^{i}(s)\frac{[\Gamma(\alpha)]^{i}}{\Gamma(i\alpha+k-i)}\int_{0}^{s}(s - \tau)^{\{i\alpha - (i+1-k)\}}u(\tau)\mathrm{d}\tau\mathrm{d}s.\\
			%& := C(t) + G(t).
		\end{aligned}
	\end{eqnarray*}
Let
\begin{eqnarray*}
	C(t) := b(t)\int_{0}^{t}\sum_{i = 0}^{k}\dbinom{k}{i}b^{k-i}(s)g^{i}(s)\frac{[\Gamma(\alpha)]^{i}}{\Gamma(i\alpha + k - i)}\int_{0}^{s}(s - \tau)^{\{i\alpha - (i+1-k)\}}u(\tau)\mathrm{d}\tau\mathrm{d}s,
\end{eqnarray*} 
and
\begin{eqnarray*}
    G(t) :=  g(t)\int_{0}^{t}(t - s)^{\alpha - 1}\sum_{i = 0}^{k}\dbinom{k}{i}b^{k-i}(s)g^{i}(s)\frac{[\Gamma(\alpha)]^{i}}{\Gamma(i\alpha+k-i)}\int_{0}^{s}(s - \tau)^{\{i\alpha - (i+1-k)\}}u(\tau)\mathrm{d}\tau\mathrm{d}s.
\end{eqnarray*}
Then, compute $C(t)$ and $G(t)$ term by term to reach the desired inequality Eq.~\eqref{guessiter}. Since $b(t)$ and $g(t)$ are nonnegative and nondecreasing functions, 
	\begin{eqnarray}\label{computationofC(t)}
		\begin{aligned}
			C(t) &\leq \sum_{i = 0}^{k}b^{k-i+1}(t)g^{i}(t)\dbinom{k}{i}\frac{[\Gamma(\alpha)]^{i}}{\Gamma(i\alpha+k-i)}\int_{0}^{t}\int_{0}^{s}(s - \tau)^{\{i\alpha -(i+1-k)\}}u(\tau)\mathrm{d}\tau\mathrm{d}s\\
			&= \sum_{i = 0}^{k}b^{k-i+1}(t)g^{i}(t)\dbinom{k}{i}\frac{[\Gamma(\alpha)]^{i}}{\Gamma(i\alpha+k-i)}\int_{0}^{t}\int_{\tau}^{t}(s - \tau)^{\{i\alpha -(i+1-k)\}}u(\tau)\mathrm{d}s\mathrm{d}\tau\\
			&= \sum_{i = 0}^{k}b^{k-i+1}(t)g^{i}(t)\dbinom{k}{i}\frac{[\Gamma(\alpha)]^{i}}{\Gamma(i\alpha+k-i+1)}\int_{0}^{t}(t - \tau)^{\{i\alpha -(i-k)\}}u(\tau)\mathrm{d}\tau\\
			&= b^{k+1}(t)\dbinom{k}{0}\frac{1}{\Gamma(k+1)}\int_{0}^{t}(t - \tau)^{k}u(\tau)\mathrm{d}\tau\\
			&+ b(t)\sum_{i = 1}^{k}b^{k-i}(t)g^{i}(t)\dbinom{k}{i}\frac{[\Gamma(\alpha)]^{i}}{\Gamma(i\alpha+k-i+1)}\int_{0}^{t}(t - \tau)^{\{i\alpha -(i-k)\}}u(\tau)\mathrm{d}\tau.\\
		\end{aligned}
	\end{eqnarray}
	Similarly, compute $G(t)$ 
	\begin{eqnarray}\label{computationofG(t)}
		\begin{aligned}
			G(t) &\leq \sum_{i = 0}^{k}b^{k-i}(t)g^{i+1}(t)\dbinom{k}{i}\frac{[\Gamma(\alpha)]^{i}}{\Gamma(i\alpha+k-i)}\int_{0}^{t}(t - s)^{\alpha - 1}\int_{0}^{s}(s - \tau)^{\{i\alpha -(i+1-k)\}}u(\tau)\mathrm{d}\tau\mathrm{d}s\\
			&= \sum_{i = 0}^{k}b^{k-i}(t)g^{i+1}(t)\dbinom{k}{i}\frac{[\Gamma(\alpha)]^{i}}{\Gamma(i\alpha+k-i)}\int_{0}^{t}\int_{\tau}^{t}(t - s)^{\alpha - 1}(s - \tau)^{\{i\alpha -(i+1-k)\}}u(\tau)\mathrm{d}s\mathrm{d}\tau\\
			&= \sum_{i = 0}^{k}b^{k-i}(t)g^{i+1}(t)\dbinom{k}{i}\frac{[\Gamma(\alpha)]^{i+1}}{\Gamma((i+1)\alpha+k-i)}\int_{0}^{t}(t - \tau)^{\{(i+1)\alpha -(i+1-k)\}}u(\tau)\mathrm{d}\tau\\
			&= g^{k+1}(t)\dbinom{k}{k}\frac{[\Gamma(\alpha)]^{k+1}}{\Gamma((k+1)\alpha)}\int_{0}^{t}(t - \tau)^{\{(k+1)\alpha - 1\}}\mathrm{d}\tau\\ 
			&+ b(t)\sum_{i = 1}^{k}b^{k-i}(t)g^{i}(t)\dbinom{k}{i-1}\frac{[\Gamma(\alpha)]^{i}}{\Gamma(i\alpha+k-i+1)}\int_{0}^{t}(t - \tau)^{\{i\alpha -(i-k)\}}u(\tau)\mathrm{d}\tau.\\
		\end{aligned}
	\end{eqnarray}
	Combining Eq.~\eqref{computationofC(t)} and Eq.~\eqref{computationofG(t)} yields
	\begin{eqnarray*}
		\begin{aligned}
			B^{k+1}u(t) &= C(t) + G(t)\leq b^{k+1}(t)\dbinom{k}{0}\frac{1}{\Gamma(k+1)}\int_{0}^{t}(t - \tau)^{k}u(\tau)\mathrm{d}\tau\\ 
			&+ b(t)\sum_{i = 1}^{k}[\dbinom{k}{i-1} + \dbinom{k}{i}]b^{k-i}(t)g^{i}(t)\frac{[\Gamma(\alpha)]^{i}}{\Gamma(i\alpha+k-i+1)}\int_{0}^{t}(t - \tau)^{i\alpha - (i - k)}u(\tau)\mathrm{d}\tau\\
			&+ g^{k+1}(t)\dbinom{k}{k}\frac{[\Gamma(\alpha)]^{k+1}}{\Gamma((k+1)\alpha)}\int_{0}^{t}(t - \tau)^{(k+1)\alpha - 1}\mathrm{d}\tau\\
			&= \sum_{i=0}^{k+1}\dbinom{k+1}{i}b^{k+1-i}(t)g^{i}\frac{[\Gamma(\alpha)]^{i}}{\Gamma(i\alpha+k-i+1)}\int_{0}^{t}(t - \tau)^{i\alpha-(i-k)}u(\tau)\mathrm{d}\tau.
		\end{aligned}
	\end{eqnarray*}
	This implies that for any $n\in\mathbb{N}^{+}$, the first claim, Eq.~\eqref{guessiter}, holds. 

	The second claim shows that $B^{n}u(t)$ vanishes as $n$ increases. For each $t$ in $[0, T)$,
	\begin{eqnarray}\label{guess_second}
       B^{n}u(t) \to 0, ~as~n\to\infty.	
	\end{eqnarray}
	For the purpose of notation simplification during the proof of the second claim, define
	\begin{eqnarray*}
		H_{n}(t) := \sum_{i = 0}^{n}\dbinom{n}{i}b^{n - i}(t)g^{i}(t)\frac{[\Gamma(\alpha)]^{i}}{\Gamma(i\alpha + n - i)}\int_{0}^{t}(t - s)^{\{i\alpha - (i+1-n)\}}u(s)\mathrm{d}s.
	\end{eqnarray*} 
	Note that $\Gamma(x)$ is positive and decreasing on $(0, 1]$ but positive and increasing on $[2, \infty)$. Let $x_{i} = i\alpha + n - i$. Then, the sequence $x_{i}$ is decreasing over $[0, n]$ since $x_{i+1} - x_{i} = \alpha - 1 < 0$ when $i$ is an integer and $i\in [0, n]$. This means $x^{\min}_{i} = n\alpha$ and $x^{\max}_{i} = n$. Furthermore, for a fixed $\alpha$, there exists a large enough $n_{0}$ such that for any $n > n_{0}$, there is $n\geq\frac{2}{\alpha}$. So the sequence satisfies $x_{i} \geq 2$ for any integer $i\in[0, n]$ if $n$ is large enough. Thus, for any $i\in[0, n]$, $\Gamma(x^{\min}_{i}) < \Gamma(x_{i})$  and 
	\begin{eqnarray*}
		H_{n}(t) \leq \frac{1}{\Gamma(n\alpha)}\sum_{i = 0}^{n}\dbinom{n}{i}b^{n - i}(t)g^{i}(t)[\Gamma(\alpha)]^{i}\int_{0}^{t}(t - s)^{\{i\alpha - (i+1-n)\}}u(s)\mathrm{d}s, ~n>n_{0}.
	\end{eqnarray*}
	Also for $\alpha \in (0, 1)$, $\Gamma(\alpha) > 1$. Therefore, 
	\begin{eqnarray*}
		H_{n}(t) \leq \frac{[\Gamma(\alpha)]^{n}}{\Gamma(n\alpha)}\sum_{i = 0}^{n}\dbinom{n}{i}b^{n - i}(t)g^{i}(t)\int_{0}^{t}(t - s)^{\{i\alpha - (i+1-n)\}}u(s)\mathrm{d}s.
	\end{eqnarray*}	 
	Let $y_{i} = i\alpha + n - i - 1$. Similar to the sequence $x_{i}$, there is $y^{\min}_{i} = n\alpha - 1 \geq 1$ for a large enough $n$ and $y^{\max}_{i} = n - 1$. Since $t\in[0, T)$, split the interval $[0, T)$ into two subintervals $[0, 1]$ and $[1, T)$. For $t\in[0, 1]$, $(t - s)^{y_{i}} \leq t^{y_{min}} = t^{n\alpha - 1}$ while if $t\in[1, T)$, $(t - s)^{y_{i}}\leq t^{y^{\max}_{i}} = t^{n}$. Thus,  
	
	\begin{eqnarray}\label{boundofoperator}
		\begin{aligned}
			H_{n}(t) &\leq \frac{[\Gamma(\alpha)]^{n}\max\{t^{n\alpha - 1}, t^{n}\}}{\Gamma(n\alpha)}\sum_{i = 0}^{n}\dbinom{n}{i}b^{n - i}(t)g^{i}(t)\int_{0}^{t}u(s)\mathrm{d}s\\		
			&= \frac{[\Gamma(\alpha)]^{n}\max\{t^{n\alpha - 1}, t^{n}\}}{\Gamma(n\alpha)}(b(t) + g(t))^{n}\int_{0}^{t}u(s)\mathrm{d}s.\\ 
		\end{aligned}
	\end{eqnarray} 
Notice that $b(t)$ and $g(t)$ are both bounded by a positive constant~$M$, i.e., $b(t)\leq M$ and $g(t) \leq M$, and $u(s)$ is locally integrable over $0\leq t < T$. This means that from Eq.~\eqref{boundofoperator}, $H_{n}(t) \to 0$ as $n\to\infty$ because the Gamma function, $\Gamma(n\alpha)$, is growing faster than a power function. Therefore, the second claim, Eq.~\eqref{guess_second}, is verified since $B^{n}u(t) \leq H_{n}(t)$ for any $n\in\mathbb{N}^{+}$.
	
The third claim establishes that the right hand side (RHS) of Eq.~\eqref{fracgrowineq} exists on $0 \leq t <T$. In order to show this statement, we first prove that for $0 \leq t < T$, the following infinite sum of sequences denoted by $L(t; \tau)$ is convergent.
	\begin{eqnarray}\label{mixfunc}
		\begin{aligned}
			L(t; \tau) &:= \sum_{n=0}^{\infty}\sum_{i=0}^{n}\dbinom{n}{i}b^{n-i}(t)g^{i}(t)\frac{[\Gamma(\alpha)]^{i}}{\Gamma(i\alpha+n-i+1)}\tau^{i\alpha+n-i}\\
			&= \sum_{i=0}^{\infty}g^{i}(t)[\Gamma(\alpha)]^{i}\tau^{i\alpha}\sum_{n=i}^{\infty}\dbinom{n}{i}b^{n-i}(t)\frac{1}{\Gamma(i\alpha+n-i+1)}\tau^{n-i}\\
			&= \sum_{i=0}^{\infty}g^{i}(t)[\Gamma(\alpha)]^{i}\tau^{i\alpha}\frac{1}{\Gamma(i\alpha+1)}\sum_{n=i}^{\infty}\dbinom{n}{i}b^{n-i}(t)\frac{1}{(i\alpha+n-i)\cdots(i\alpha+1)}\tau^{n-i},\\
		\end{aligned}
	\end{eqnarray}
	where $(i\alpha+n-i)\cdots(i\alpha+1)$ is a product and it takes one if $(i\alpha+n-i) < i\alpha+1$. Let $k=n-i$, then compute
	\begin{eqnarray}\label{computationofproduct}
		\begin{aligned}
			\dbinom{n}{i}\frac{1}{(i\alpha + n - i)\cdots(i\alpha + 1)} &= \frac{(k+i)!}{i!k!}\frac{1}{(i\alpha + k)\cdots(i\alpha + 1)}
			= \frac{1}{k!}\frac{(k+i)\cdots(i+1)}{(i\alpha + k)\cdots(i\alpha + 1)}\\
			&=\frac{1}{\alpha^{k}}\frac{1}{k!}\frac{(k+1)\cdots(i+1)}{(k/\alpha + i)\cdots(i + 1/\alpha)}\leq \frac{1}{\alpha^{k}}\frac{1}{k!}.
		\end{aligned}
	\end{eqnarray}
	Substituting $k = n - i$ and Eq.~\eqref{computationofproduct} into Eq.~\eqref{mixfunc} gives
	\begin{eqnarray*}
		L(t; \tau) \leq \sum_{i=0}^{\infty}\frac{g^{i}(t)[\Gamma(\alpha)]^{i}\tau^{i\alpha}}{\Gamma(i\alpha+1)}\sum_{k=0}^{\infty}\frac{1}{\alpha^{k}}\frac{b^{k}(t)\tau^{k}}{k!} = E_{\alpha}(g(t)\Gamma(\alpha)\tau^{\alpha})\exp(\frac{1}{\alpha}b(t)\tau),
	\end{eqnarray*}
	which is finite for $0\leq t < T$. Furthermore, since $b(t) \leq M$ and $g(t)\leq M$, define 
	\begin{eqnarray*}
		L(M;\tau) :=  \sum_{i=0}^{\infty}\frac{M^{i}[\Gamma(\alpha)]^{i}\tau^{i\alpha}}{\Gamma(i\alpha+1)}\sum_{k=0}^{\infty}\frac{1}{\alpha^{k}}\frac{M^{k}\tau^{k}}{k!} = E_{\alpha}(M\Gamma(\alpha)\tau^{\alpha})\exp(\frac{1}{\alpha}M\tau),
	\end{eqnarray*}
	which means $L(M; \tau)$ is finite and $L(t; \tau)\leq L(M; \tau)$.
	Then, compute the RHS of Eq.~\eqref{fracgrowineq}
	\begin{eqnarray*}
		\begin{aligned}
		  RHS &= a(t) + \sum_{n = 1}^{\infty}\sum_{i=0}^{n} \dbinom{n}{i}b^{n - i}(t)g^{i}(t)\frac{[\Gamma(\alpha)]^{i}}{\Gamma(i\alpha + n - i + 1)}\int_{0}^{t}\frac{\mathrm{d}}{\mathrm{d}t}(t - s)^{\{i\alpha + n - i\}}a(s)\mathrm{d}s \\
		  & \leq a(t) + \sum_{n = 1}^{\infty}\sum_{i=0}^{n} \dbinom{n}{i}M^{n - i}M^{i}\frac{[\Gamma(\alpha)]^{i}}{\Gamma(i\alpha + n - i + 1)}\int_{0}^{t}\frac{\mathrm{d}}{\mathrm{d}t}(t - s)^{\{i\alpha + n - i\}}a(s)\mathrm{d}s \\
		  &= a(t) + \int_{0}^{t}\frac{\mathrm{d}L(M; t-s)}{\mathrm{d}t}a(s)\mathrm{d}s.
		\end{aligned}
	\end{eqnarray*}
Since the Mittag-Leffler function $E_{\alpha}(t^{\alpha})$ is an entire function in $t^{\alpha}$, see~\cite{gorenflo2002computation}, the exponential function~$\exp(t)$ is uniformly continuous in $t$, and both~$t^{\alpha - 1}$ and $a(t)$ are locally integrable over $0 \leq t < T$, the integral~$\int_{0}^{t}\frac{\mathrm{d}L(M; t-s)}{\mathrm{d}t}a(s)\mathrm{d}s$ is finite. This implies that the $RHS$ of Eq.~\eqref{fracgrowineq} is finite. So the last claim is also verified, thereby completing the proof.
\end{proof}

\begin{corr}\label{consisgronwall}
	Suppose the conditions in Theorem~\ref{maingrownineq} are satisfied and furthermore, $a(t)$ is nondecreasing on $0 \leq t < T$. Then 
	\begin{eqnarray*}
		u(t) \leq a(t)E_{\alpha}(g(t)\Gamma(\alpha)t^{\alpha})\exp(\frac{1}{\alpha}b(t)t).
	\end{eqnarray*}
\end{corr}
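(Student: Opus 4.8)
The plan is to feed the conclusion of Theorem~\ref{maingrownineq} into the extra monotonicity hypothesis on $a$ and then recognize the resulting series as the function $L(t;\tau)$ already analyzed in the third claim of that proof. Concretely, I would begin from the established bound
\[
u(t) \leq a(t) + \sum_{n=1}^{\infty}\sum_{i=0}^{n}\binom{n}{i}b^{n-i}(t)g^{i}(t)\frac{[\Gamma(\alpha)]^{i}}{\Gamma(i\alpha+n-i)}\int_{0}^{t}(t-s)^{i\alpha+n-i-1}a(s)\,\mathrm{d}s,
\]
where I have rewritten the exponent $\{i\alpha-(i+1-n)\}$ as $i\alpha+n-i-1$. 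Since $a$ is nondecreasing, I would replace $a(s)$ by $a(t)$ for every $s\in[0,t]$ inside each integral. All the coefficients $\binom{n}{i}b^{n-i}(t)g^{i}(t)[\Gamma(\alpha)]^{i}/\Gamma(i\alpha+n-i)$ and the kernels $(t-s)^{i\alpha+n-i-1}$ are nonnegative, so this substitution is valid termwise and preserves the inequality.

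Next I would evaluate the elementary integral $\int_{0}^{t}(t-s)^{i\alpha+n-i-1}\,\mathrm{d}s = t^{i\alpha+n-i}/(i\alpha+n-i)$ and absorb the extra denominator through the Gamma identity $(i\alpha+n-i)\,\Gamma(i\alpha+n-i)=\Gamma(i\alpha+n-i+1)$. After factoring $a(t)$ out of the whole right-hand side, this produces
\[
u(t) \leq a(t)\Bigg[1+\sum_{n=1}^{\infty}\sum_{i=0}^{n}\binom{n}{i}b^{n-i}(t)g^{i}(t)\frac{[\Gamma(\alpha)]^{i}}{\Gamma(i\alpha+n-i+1)}t^{i\alpha+n-i}\Bigg].
\]
The bracketed expression — with the leading $1$ supplied precisely by the $n=0$, $i=0$ term — is exactly $L(t;\tau)$ from Eq.~\eqref{mixfunc} evaluated at $\tau=t$.

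Finally, I would invoke the estimate already proved in the third claim, namely $L(t;\tau)\leq E_{\alpha}(g(t)\Gamma(\alpha)\tau^{\alpha})\exp(\tfrac{1}{\alpha}b(t)\tau)$, specialized to $\tau=t$. Combining this with the previous display yields $u(t)\leq a(t)E_{\alpha}(g(t)\Gamma(\alpha)t^{\alpha})\exp(\tfrac{1}{\alpha}b(t)t)$, which is the claimed bound.

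The only step requiring care — and the main (though mild) obstacle — is justifying the termwise replacement $a(s)\mapsto a(t)$ under the infinite double sum and the reassembly of the series into $L(t;t)$. This is legitimate because every summand is nonnegative and the series $L(t;t)$ was shown in the third claim of Theorem~\ref{maingrownineq} to converge for each $t\in[0,T)$; hence, by Tonelli's theorem, one may interchange the summation with integration, apply the pointwise inequality $a(s)\le a(t)$ inside each integral, and recombine. No analytic input beyond Theorem~\ref{maingrownineq} and its third claim is needed.
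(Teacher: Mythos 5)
Your proposal is correct and follows essentially the same route as the paper: factor out $a(t)$ using monotonicity, evaluate the elementary integrals via $(i\alpha+n-i)\Gamma(i\alpha+n-i)=\Gamma(i\alpha+n-i+1)$, and bound the resulting series by $E_{\alpha}(g(t)\Gamma(\alpha)t^{\alpha})\exp(\tfrac{1}{\alpha}b(t)t)$ using the $L(t;\tau)$ estimate from the third claim of Theorem~\ref{maingrownineq}. Your explicit handling of the $n=0$ term and the Tonelli justification for the termwise substitution are slightly more careful than the paper's presentation, but the argument is the same.
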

\begin{proof}
	From the proof of Theorem~\ref{maingrownineq}, 
	\begin{eqnarray*}
		\begin{aligned}
			u(t) &\leq a(t) + \sum_{n = 1}^{\infty}\sum_{i=0}^{n} \dbinom{n}{i}b^{n - i}(t)g^{i}(t)\frac{[\Gamma(\alpha)]^{i}}{\Gamma(i\alpha + n - i)}\int_{0}^{t}(t - s)^{\{i\alpha - (i+1-n)\}}a(s)\mathrm{d}s.			
		\end{aligned}
	\end{eqnarray*}
Since $a(t)$ is nondecreasing,
\begin{eqnarray*}
	\begin{aligned}
		u(t) &\leq a(t)\sum_{n = 0}^{\infty}\sum_{i=0}^{n} \dbinom{n}{i}b^{n - i}(t)g^{i}(t)\frac{[\Gamma(\alpha)]^{i}}{\Gamma(i\alpha + n - i)}\int_{0}^{t}(t - s)^{\{i\alpha - (i+1-n)\}}\mathrm{d}s\\
				&\leq a(t)\sum_{n = 0}^{\infty}\sum_{i=0}^{n} \dbinom{n}{i}b^{n - i}(t)g^{i}(t)\frac{[\Gamma(\alpha)]^{i}}{\Gamma(i\alpha + n - i+1)}t^{i\alpha + n - i}\\
				&\leq a(t)E_{\alpha}(g(t)\Gamma(\alpha)t^{\alpha})\exp(\frac{1}{\alpha}b(t)t).			
	\end{aligned}
\end{eqnarray*}
This completes the proof.
\end{proof}

\begin{remark}
	From Theorem~\ref{maingrownineq} and Corollary~\ref{consisgronwall}, we see that if $\alpha = 1$, Theorem~\ref{maingrownineq} and Corollary~\ref{consisgronwall} are the same as Lemma~\ref{stadgronwall}; while if $b(t)\equiv 0$, Theorem~\ref{maingrownineq} and Corollary~\ref{consisgronwall} become Lemma~\ref{fracgronwall}.
\end{remark}

\section{Existence and uniqueness of the solution to fractional SDEs}\label{application} 
In this section, using the main results from Section~\ref{mainineq}, we investigate the existence and uniqueness of the solution to the fractional SDE Eq.~\eqref{fraceq1} with fractional order $0 < \alpha < 1$. By application of the classical Picard-Lindel\"of successive approximation scheme and the standard Gronwall-Bellman inequality, existence and uniqueness of the solution to Eq.~\eqref{fraceq1} with fractional order $\frac{1}{2} < \alpha < 1$ is discussed in~\cite{pedjeu2012stochastic}. However, the case with $0 < \alpha \leq \frac{1}{2}$ remains to be investigated. We can apply the generalized Gronwall-Bellman inequality developed in Section~\ref{mainineq} to derive existence and uniqueness of the solution to Eq.~\eqref{fraceq1} when $0 < \alpha < 1$. 

\begin{thm}\label{existenceanduniqueness}
	Let $0 < \alpha < 1, T > 0$, and $B_{t}$ be a $m-$dimensional Brownian motion on a complete probability space $\Omega\equiv (\Omega, \mathcal{F}, \mathbb{P})$. Assume that $b(\cdot, \cdot), \sigma_{1}(\cdot, \cdot): [0, T]\times\mathbb{R}^{n}\to\mathbb{R}^{n}, \sigma_{2}(\cdot, \cdot): [0, T]\times\mathbb{R}^{n}\to\mathbb{R}^{n\times m}$ are measurable functions satisfying the linear growth condition,
	\begin{eqnarray}\label{linearcond}
		|b(t, x)|^{2} + |\sigma_{1}(t, x)|^{2} + |\sigma_{2}(t, x)|^{2} \leq K^{2}(1 + |x|^{2}),
	\end{eqnarray}
	for some constant $K > 0$ and the Lipschitz condition,
	\begin{eqnarray}\label{lipschitzcond}
		|b(t, x) - b(t, y)| + |\sigma_{1}(t, x) - \sigma_{1}(t, y)| + |\sigma_{2}(t, x) - \sigma_{2}(t, y)| \leq L|x - y|,
	\end{eqnarray}
	for some constant $L > 0$. Let $x_{0}$ be a random variable, which is independent of the $\sigma-$algebra $\mathcal{F}_{t}\subset\mathcal{F}_{\infty}$ generated by $\{B_{t}, t\geq 0\}$ and satisfies $\mathbb{E}|x_{0}|^{2} < \infty$. Then, the fractional stochastic  differential equation Eq.~\eqref{fraceq1} has a unique $t-$continuous solution~$x(t, \omega)$ with the property that $x(t, \omega)$ is adapted to the filtration~$\mathcal{F}_{t}^{x_{0}}$ generated by $x_{0}$ and $\{B_{t}, t\geq 0\}$, and 
	\begin{eqnarray*}
		\mathbb{E}\Bigg[\int_{0}^{T}|x(t)|^{2}\mathrm{d}t\Bigg] < \infty.
	\end{eqnarray*}
\end{thm}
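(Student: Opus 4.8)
The plan is to run the classical Picard--Lindel\"of successive approximation scheme on the stochastic integral equation Eq.~\eqref{fracinteg}, replacing the ordinary Gronwall inequality by the mixed inequality of Theorem~\ref{maingrownineq} at every point where the classical argument would invoke it. Concretely, I would set $x^{(0)}(t)\equiv x_{0}$ and define the iterates
\begin{equation*}
x^{(k+1)}(t) = x_{0} + \int_{0}^{t} b(s,x^{(k)}(s))\,\mathrm{d}s + \alpha\int_{0}^{t}(t-s)^{\alpha-1}\sigma_{1}(s,x^{(k)}(s))\,\mathrm{d}s + \int_{0}^{t}\sigma_{2}(s,x^{(k)}(s))\,\mathrm{d}B_{s}.
\end{equation*}
The first task is to check that each $x^{(k)}$ is $\mathcal{F}_{t}^{x_{0}}$-adapted, $t$-continuous, and square integrable, with $\sup_{t\le T}\mathbb{E}|x^{(k)}(t)|^{2}$ bounded uniformly in $k$. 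This a priori bound follows from the linear growth condition \eqref{linearcond} together with one application of Theorem~\ref{maingrownineq} (or Corollary~\ref{consisgronwall}) to the function $t\mapsto\mathbb{E}|x^{(k)}(t)|^{2}$, once the singular term has been estimated as described next.

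The decisive estimate --- and the reason the result extends below $\alpha=\tfrac12$ --- concerns the fractional integral. Writing $D_{k}(t):=\mathbb{E}|x^{(k+1)}(t)-x^{(k)}(t)|^{2}$, I would use the elementary bound $|p+q+r+s|^{2}\le 4(|p|^{2}+|q|^{2}+|r|^{2}+|s|^{2})$, the It\^o isometry on the $\sigma_{2}$ term, Cauchy--Schwarz on the drift, and the Lipschitz condition \eqref{lipschitzcond}. For the fractional term the point is \emph{not} to square the kernel, which would produce the weight $(t-s)^{2\alpha-2}$ that fails to be locally integrable for $\alpha\le\tfrac12$ (the obstruction in~\cite{pedjeu2012stochastic}); instead I would write $(t-s)^{\alpha-1}=(t-s)^{(\alpha-1)/2}(t-s)^{(\alpha-1)/2}$ and apply Cauchy--Schwarz, so that $\int_{0}^{t}(t-s)^{\alpha-1}\,\mathrm{d}s=t^{\alpha}/\alpha$ is finite for \emph{every} $\alpha>0$ and only the single integrable power $(t-s)^{\alpha-1}$ survives in front of $|\sigma_{1}|^{2}$. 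This yields the mixed recursion
\begin{equation*}
D_{k+1}(t)\le C_{1}\int_{0}^{t}D_{k}(s)\,\mathrm{d}s + C_{2}\int_{0}^{t}(t-s)^{\alpha-1}D_{k}(s)\,\mathrm{d}s
\end{equation*}
for constants $C_{1},C_{2}$ depending on $L,\alpha,T$, which is exactly one application of the integral operator appearing in the proof of Theorem~\ref{maingrownineq} with constant coefficients.

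Iterating and using monotonicity of that operator gives $D_{k}(t)\le \mathcal{B}^{k}D_{0}(t)$, and the first claim of Theorem~\ref{maingrownineq} bounds $\mathcal{B}^{k}D_{0}$ on $[0,T]$ by an expression decaying like $R^{k}/\Gamma(k\alpha)$ for some $R>0$. Since $\Gamma((k+1)\alpha)/\Gamma(k\alpha)\to\infty$, a ratio-test argument shows not only that $\mathcal{B}^{k}D_{0}\to 0$ but that $\sum_{k}\big(\sup_{t\le T}D_{k}(t)\big)^{1/2}<\infty$; this is the fractional analogue of the factorial decay in the classical Picard estimate. Consequently $(x^{(k)})$ is Cauchy in $L^{2}$ uniformly on $[0,T]$, and a Chebyshev--Borel--Cantelli argument upgrades this to almost sure uniform convergence along a subsequence to a $t$-continuous adapted limit $x(t)$. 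Passing to the limit in the iteration is legitimate because the Lipschitz condition \eqref{lipschitzcond} and the same weighted Cauchy--Schwarz bound make all three integral operators $L^{2}$-continuous, so $x$ solves Eq.~\eqref{fracinteg}; the uniform $L^{2}$ bound then yields $\mathbb{E}\int_{0}^{T}|x(t)|^{2}\,\mathrm{d}t<\infty$.

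For uniqueness, if $x$ and $y$ are two such solutions I would set $u(t):=\mathbb{E}|x(t)-y(t)|^{2}$ and rerun the same estimates to get $u(t)\le C_{1}\int_{0}^{t}u(s)\,\mathrm{d}s + C_{2}\int_{0}^{t}(t-s)^{\alpha-1}u(s)\,\mathrm{d}s$, which is the hypothesis of Theorem~\ref{maingrownineq} with $a(t)\equiv 0$; the theorem forces $u\equiv 0$, whence $x(t)=y(t)$ almost surely for every $t$, and pathwise uniqueness follows from $t$-continuity. The main obstacle throughout is precisely the singular integral for $0<\alpha\le\tfrac12$: the whole argument hinges on never squaring the kernel --- which converts the problem into a genuinely mixed singular/nonsingular integral inequality --- and on the super-exponential growth of $\Gamma(n\alpha)$ supplied by Theorem~\ref{maingrownineq}, which plays the role that the ordinary factorial plays in the standard Gronwall-based existence proof.
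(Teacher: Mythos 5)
Your proposal is correct and follows essentially the same route as the paper: Picard--Lindel\"of iteration on Eq.~\eqref{fracinteg}, the crucial Cauchy--Schwarz step that distributes the kernel as $(t-s)^{(\alpha-1)/2}(t-s)^{(\alpha-1)/2}$ so only the integrable power $(t-s)^{\alpha-1}$ appears (this is exactly the paper's estimate of its term $J_{2}$), iteration of the resulting mixed operator bounded by the first claim and Eq.~\eqref{boundofoperator} from the proof of Theorem~\ref{maingrownineq}, a Chebyshev--Borel--Cantelli upgrade to almost sure uniform convergence, and uniqueness via the generalized Gronwall inequality (Corollary~\ref{consisgronwall}) applied to $\mathbb{E}|x_{1}(t)-x_{2}(t)|^{2}$. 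The only differences are cosmetic (your summing of square roots of the $L^{2}$ increments is in fact slightly cleaner than the paper's corresponding Cauchy estimate), so there is nothing substantive to add.
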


\begin{proof}
	\textbf{(Existence)} From Eq.~\eqref{fracinteg}, the corresponding equivalent stochastic integral equation of the fractional stochastic  differential equation Eq.~\eqref{fraceq1} is rewritten as
	\begin{eqnarray*}
	x(t) = x_{0} + \int_{0}^{t}b(s, x(s))\mathrm{d}s + \alpha\int_{0}^{t}(t - s)^{\alpha - 1}\sigma_{1}(s, x(s))\mathrm{d}s + \int_{0}^{t}\sigma_{2}(s, x(s))\mathrm{d}B_{s},
	\end{eqnarray*}
	where $0 \leq t < T$ and $0 < \alpha < 1$. For more details about this equivalence between Eq.~\eqref{fraceq1} and Eq.~\eqref{fracinteg}, we refer to~\cite{jumarie2005solution, jumarie2006new, jumarie2005representation}. By the method of Picard-Lindel\"of successive approximations, define $x^{0}(t) = x_{0}$ and $x^{k}(t) = x^{k}(t, \omega)$ inductively as follows
	\begin{eqnarray}\label{picarditeration}
		x^{k+1}(t) = x_{0} + \int_{0}^{t}b(s, x^{k}(s))\mathrm{d}s + \alpha\int_{0}^{t}(t - s)^{\alpha - 1}\sigma_{1}(s, x^{k}(s))\mathrm{d}s + \int_{0}^{t}\sigma_{2}(s, x^{k}(s))\mathrm{d}B_{s}.
	\end{eqnarray}
	Applying the inequality $|x + y + z|^{2} \leq 3|x|^{2} + 3|y|^{2} + 3|z|^{2}$ leads to
		\begin{eqnarray*}\label{triangularineq}
		\begin{aligned}
		\mathbb{E}|x^{k+1}(t) - x^{k}(t)|^{2} &\leq 3\mathbb{E}\bigg|\int_{0}^{t}\bigg(b(s, x^{k}(s)) - b(s, x^{k-1}(s))\bigg)\mathrm{d}s\bigg|^{2}\\ 
		&+ 3\mathbb{E}\bigg|\alpha\int_{0}^{t}(t - s)^{\alpha - 1}\bigg(\sigma_{1}(s, x^{k}(s)) - \sigma_{1}(s, x^{k-1}(s))\bigg)\mathrm{d}s\bigg|^{2}\\ 
		&+ 3\mathbb{E}\bigg|\int_{0}^{t}\bigg(\sigma_{2}(s, x^{k}(s)) - \sigma_{2}(s, x^{k-1}(s))\bigg)\mathrm{d}s\bigg|^{2}\\
		&:= I_{1} + I_{2} + I_{3}.
		\end{aligned}
		\end{eqnarray*}
	Using the Cauchy-Schwartz inequality on the first two terms, $I_{1}$ and $I_{2}$, plus It\^o's Isometry, see in~\cite{oksendal2013stochastic}, in the third term, $I_{3}$, produces
		\begin{eqnarray*}\label{triangularineq}
			\begin{aligned}
				\mathbb{E}|x^{k+1}(t) - x^{k}(t)|^{2} &\leq 3T\mathbb{E}\int_{0}^{t}\bigg(b(s, x^{k}(s)) - b(s, x^{k-1}(s))\bigg)^{2}\mathrm{d}s\\ 
				&+ 3\alpha^{2}\int_{0}^{t}(t - s)^{\alpha - 1}\mathrm{d}s\mathbb{E}\int_{0}^{t}(t - s)^{\alpha - 1}\bigg(\sigma_{1}(s, x^{k}(s)) - \sigma_{1}(s, x^{k-1}(s))\bigg)^{2}\mathrm{d}s\\ 
				&+ 3
				\mathbb{E}\int_{0}^{t}\bigg(\sigma_{2}(s, x^{k}(s)) - \sigma_{2}(s, x^{k-1}(s))\bigg)^{2}\mathrm{d}s\\
				&:= J_{1} + J_{2} + J_{3}.
			\end{aligned}
		\end{eqnarray*}
Finally, using the Lipschitz condition Eq.~\eqref{lipschitzcond} on all terms, $J_{1}, J_{2}, J_{3}$, evaluating the first integral in the second term, $J_{2}$, and combining the first and third terms, $J_{1}$ and $J_{3}$, yields
	\begin{eqnarray}\label{iterineqofappl}
		\begin{aligned}
			\mathbb{E}|x^{k+1}(t) - x^{k}(t)|^{2} &\leq 3L^{2}(1+T)\int_{0}^{t}\mathbb{E}|x^{k}(s) - x^{k-1}(s)|^{2}\mathrm{d}s\\ 
			&+ 3L^{2}(1+T)\int_{0}^{t}(t - s)^{\alpha - 1}\mathbb{E}|x^{k}(s) - x^{k-1}(s)|^{2}\mathrm{d}s.
		\end{aligned}
	\end{eqnarray}
	Thus, for locally integrable function~$\phi(t)$, define an operator~$B$ as follows
	\begin{eqnarray*}
		B\phi(t) := 3L^{2}(1+T)\Bigg\{\int_{0}^{t}\phi(s)\mathrm{d}s + \int_{0}^{t}(t - s)^{\alpha - 1}\phi(s)\mathrm{d}s\Bigg\}.
	\end{eqnarray*}
	Then, iterating Eq.~\eqref{iterineqofappl} yields
	\begin{eqnarray*}
		\mathbb{E}|x^{k+1}(t) - x^{k}(t)|^{2} \leq B(\mathbb{E}|x^{k}(t) - x^{k-1}(t)|^{2}) \leq \cdots \leq B^{k}(\mathbb{E}|x^{1}(t) - x^{0}(t)|^{2}).
	\end{eqnarray*}
	Since $0 < \alpha < 1$ and $\mathbb{E}|x^{1}(t) - x^{0}(t)|^{2}$ is nonnegative and locally integrable, from the first claim, Eq.~\eqref{guessiter}, and the Eq.~\eqref{boundofoperator} in the proof of the second claim in Section~\ref{mainineq}, we know that 
	\begin{eqnarray*}
		\begin{aligned}
			\mathbb{E}|x^{k+1}(t) - x^{k}(t)|^{2} &\leq B^{k}(\mathbb{E}|x^{1}(t) - x^{0}(t)|^{2})\\ &\leq  \frac{[\Gamma(\alpha)]^{k}\max\{t^{k\alpha - 1}, t^{k}\}}{\Gamma(k\alpha)}[6L^{2}(1+T)]^{k}\int_{0}^{t}\mathbb{E}|x^{1}(s) - x^{0}(s)|^{2}\mathrm{d}s. 
		\end{aligned}
	\end{eqnarray*}	
Similarly, apply the Cauchy-Schwartz inequality, the It\^o's Isometry, and the linear growth condition, Eq.~\eqref{linearcond}, instead of Lipschitz condition, Eq.~\eqref{lipschitzcond}, to compute
	\begin{eqnarray*}
		\mathbb{E}|x^{1}(t) - x^{0}(t)|^{2} \leq 3(1+T)K^{2}(1 + \mathbb{E}|x_{0}|^{2})(t + t^{\alpha}).
	\end{eqnarray*}
This implies 
	\begin{eqnarray}\label{supoutside}
		\sup_{0 \leq t \leq T}\mathbb{E}|x^{k+1}(t) - x^{k}(t)|^{2} \leq M_{0} \frac{[\Gamma(\alpha)]^{k}\max\{T^{k\alpha - 1}, T^{k}\}}{\Gamma(k\alpha)}[6L^{2}(1+T)]^{k},
	\end{eqnarray}
	where $M_{0} = 3(1+T)K^{2}(1 + \mathbb{E}|x_{0}|^{2})\bigg(\frac{T^{2}}{2} + \frac{T^{\alpha + 1}}{\alpha + 1}\bigg)$ is independent of $k$ and $t$. Thus, for any $m > n > 0$, 
\begin{eqnarray*}
	\begin{aligned}
		\|x^{m}(t) - x^{n}(t)\|^{2}_{L^{2}(\mathbb{P})} &\leq \sum_{k=n}^{m}\|x^{k+1}(t) - x^{k}(t)\|^{2}_{L^{2}(\mathbb{P})} = \sum_{k=n}^{m}\int_{0}^{T}\mathbb{E}|x^{k+1}(t) - x^{k}(t)|^{2}\mathrm{d}t\\
		& \leq M_{1}\sum_{k=n}^{m}\frac{[\Gamma(\alpha)]^{k}\max\{T^{k\alpha - 1}, T^{k}\}}{\Gamma(k\alpha)}[6L^{2}(1+T)]^{k}\\
		&  \to 0, ~as~m,~n\to\infty,
	\end{aligned}
\end{eqnarray*}
where $M_{1} = 3(1+T)K^{2}(1 + \mathbb{E}|x_{0}|^{2})\bigg(\frac{T^{3}}{2} + \frac{T^{\alpha + 2}}{\alpha + 1}\bigg)$ is independent of $k$ and $t$.
This means the successive approximations~$(x^{k}(t))$ are mean-square convergent uniformly on $[0, T]$.
It remains now to show that the sequence of successive approximations~$(x^{k}(t))$ is almost surely convergent. First, apply Chebyshev's inequality to yield
\begin{eqnarray*}
	\begin{aligned}
	    \sum_{k = 1}^{\infty}\mathbb{P}\bigg\{\sup_{0 \leq t \leq T}|x^{k+1}(t) - x^{k}(t)| > \frac{1}{k^{2}}\bigg\} &\leq \sum_{k=1}^{\infty}k^{4}\mathbb{E}\bigg(\sup_{0\leq t\leq T}|x^{k+1}(t) - x^{k}(t)|\bigg)^{2}\\
	    &= \sum_{k=1}^{\infty}k^{4}\mathbb{E}\bigg(\sup_{0\leq t\leq T}|x^{k+1}(t) - x^{k}(t)|^{2}\bigg).		
	\end{aligned}
\end{eqnarray*}
By computations similar to those leading to Eq.~\eqref{supoutside} and Doob's Maximal Inequality for martingales, 
\begin{eqnarray*}
	\sum_{k = 1}^{\infty}\mathbb{P}\bigg\{\sup_{0 \leq t \leq T}|x^{k+1}(t) - x^{k}(t)| > \frac{1}{k^{2}}\bigg\} \leq M_{0}\sum_{k=1}^{\infty}\frac{[\Gamma(\alpha)]^{k}\max\{T^{k\alpha - 1}, T^{k}\}}{\Gamma(k\alpha)}[6L^{2}(1+T)]^{k}k^{4},
\end{eqnarray*}
which is finite. Then, applying the Borel-Cantelli lemma yields, 
\begin{eqnarray*}
	\mathbb{P}\bigg\{\sup_{0 \leq t \leq T}|x^{k+1}(t) - x^{k}(t)| > \frac{1}{k^{2}} ~\textrm{for infinitely many}~k\bigg\} = 0.
\end{eqnarray*}
So there exists a random variable~$x(t)$, which is the limit of the following sequence
\begin{eqnarray*}
	x^{k}(t) = x^{0}(t) + \sum_{n = 0}^{k-1}(x^{n+1}(t) - x^{n}(t))\to x(t) ~a.s.,
\end{eqnarray*}
uniformly on $[0, T]$. Also $x(t)$ is $t-$continuous since $x^{k}(t)$ is $t-$continuous for all $k$. Therefore, taking the limit on both sides of Eq.~\eqref{picarditeration} as $k\to\infty$, there is a stochastic process $x(t)$ satisfying Eq.~\eqref{fracinteg}.

\textbf{(Uniqueness)} The uniqueness is due to the It\^o Isometry and the Lipschitz condition, Eq.~\eqref{lipschitzcond}. Let $x_{1}(t) = x_{1}(t,\omega)$ and $x_{2}(t) = x_{2}(t, \omega)$ be solutions of Eq.~\eqref{fracinteg}, which have the initial values,~$x_{1}(0) = y_{1}$ and $x_{2}(0) = y_{2}$, respectively. Similarly, apply the Cauchy-Schwartz inequality, the It\^o Isometry, and the Lipschitz condition Eq.~\eqref{lipschitzcond} to compute
\begin{eqnarray*}
	\begin{aligned}
		\mathbb{E}|x_{1}(t) - x_{2}(t)|^{2} &\leq 4\mathbb{E}|y_{1} - y_{2}|^{2} + 4L^{2}(1 + T)\int_{0}^{t}\mathbb{E}|x_{1}(s) - x_{2}(s)|^{2}\mathrm{d}s\\ 
		&+ 4\alpha L^{2}T^{\alpha}\int_{0}^{t}(t - s)^{\alpha - 1}\mathbb{E}|x_{1}(s) - x_{2}(s)|^{2}\mathrm{d}s.	\end{aligned}
\end{eqnarray*}
By application of the generalized Gronwall-Bellman inequality in Corollary~\ref{consisgronwall}, we have
\begin{eqnarray*}
	\mathbb{E}|x_{1}(t) - x_{2}(t)|^{2} \leq 4\mathbb{E}|y_{1} - y_{2}|^{2}E_{\alpha}(4\alpha L^{2}T^{\alpha}\Gamma(\alpha)t^{\alpha})\exp(\frac{1}{\alpha}4L^{2}(1 + T)t).
\end{eqnarray*}
Since $x_{1}(t)$ and $x_{2}(t)$ both satisfy the stochastic integral equation Eq.~\eqref{fracinteg}, the initial values~$y_{1}$ and $y_{2}$ are both equal to $x_{0}$. This means $\mathbb{E}|x_{1}(t) - x_{2}(t)|^{2} = 0$ for all $t > 0$. Furthermore, 
\begin{eqnarray*}
	\mathbb{P}\bigg\{|x_{1}(t) - x_{2}(t)| = 0,~\textrm{for all}~0 \leq t \leq T\bigg\} = 1.
\end{eqnarray*}
Therefore, the uniqueness of the solution to Eq.~\eqref{fracinteg} is proved.
\end{proof}
\section{Fractional Fokker-Planck-Kolmogorov Equation}\label{fpe}
Based on the existence and uniqueness Theorem~\ref{existenceanduniqueness} developed in Section~\ref{application}, we derive the fractional Fokker-Planck-Kolmogorov equation associated to the unique solution of the fractional SDE, Eq.~\eqref{fraceq1}. Before deriving the fractional Fokker-Planck-Kolmogorov equation, we first introduce an It\^o formula from \cite{pedjeu2012stochastic} to the following It\^o process 
\begin{eqnarray}\label{fraceq14}
x(t) = x_{0} + \int_{0}^{t}b(s,x(s))\mathrm{d}s + \int_{0}^{t}\sigma_{1}(s, x(s))\mathrm{d}s^{\alpha} + \int_{0}^{t}\sigma_{2}(s, x(s))\mathrm{d}B_{s},
\end{eqnarray}
where $0 < \alpha < 1$, $B_{t}$ is the $m-$dimensional standard Brownian motion, and functions $b, \sigma_{1}, \sigma_{2}$ satisfy the conditions in Theorem~\ref{existenceanduniqueness}.
\begin{lem}\label{itoformula}
	Let $X(t)$ satisfy the Eq.~\eqref{fraceq14} and furthermore, let $V\in C[R^{+}\times R^{n}, R^{m}]$, and assume that $V_{t}$, $V_{x}$, $V_{xx}$ exist and continuous for $(t, x)\in R^{+}\times R^{n}$, where $V_{x}(t, x)$ is an $m\times n$ Jacobian matrix of $V(t, x)$ and $V_{xx}(t, x)$ is an $m\times n$ Hessian matrix. Then,
	\begin{eqnarray*}
		\mathrm{d}V(t, X(t)) = L_{1}V(t, X(t))\mathrm{d}t + L_{2}V(t, X(t))\mathrm{d}t^{\alpha} + L_{3}V(t, X(t))\mathrm{d}B_{t},
	\end{eqnarray*}
	where 
\begin{eqnarray*}
	L_{1}V(t, x) = V_{t}(t, x) + V_{x}(t, x)b(t, x) + \frac{1}{2}\sigma_{2}(t, x)^{T}V_{xx}(t, x)\sigma_{2}(t, x)
\end{eqnarray*}
and
\begin{eqnarray*}
	L_{2}V(t, x) = V_{x}(t, x)\sigma_{1}(t, x), ~~~L_{3}V(t, x) = V_{x}(t, x)\sigma_{2}(t, x).
\end{eqnarray*}
\end{lem}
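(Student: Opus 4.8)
The plan is to follow the classical discretize-and-Taylor-expand derivation of the It\^o formula, treating the fractional integral term as an extra driving process whose quadratic variation I will show to vanish. For a partition $0 = t_0 < t_1 < \cdots < t_N = t$ of mesh tending to $0$, I would write the telescoping sum $V(t, X(t)) - V(0, x_0) = \sum_j \Delta V_j$ and Taylor-expand each increment to second order,
\[
\Delta V_j = V_t\,\Delta t_j + V_x\,\Delta X_j + \tfrac{1}{2}\,\Delta X_j^{T} V_{xx}\,\Delta X_j + R_j,
\]
with the derivatives evaluated at $(t_j, X(t_j))$ and $R_j$ the higher-order remainder. Abbreviating $g(s) := \sigma_1(s, x(s))$ and writing $Y(t) := \int_0^t \sigma_1\,\mathrm{d}s^{\alpha} = \alpha\int_0^t (t-s)^{\alpha-1}g(s)\,\mathrm{d}s$ for the fractional term, the increment of the driver is $\Delta X_j = b\,\Delta t_j + \Delta Y_j + \sigma_2\,\Delta B_j$ up to lower-order pieces, so the whole argument reduces to deciding which products of increments survive in the $L^2$ limit.

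The second step is to justify the symbolic multiplication table $(\mathrm{d}B)^2 = \mathrm{d}t$, while $(\mathrm{d}t)^2 = \mathrm{d}t\,\mathrm{d}t^{\alpha} = (\mathrm{d}t^{\alpha})^2 = \mathrm{d}t\,\mathrm{d}B = \mathrm{d}t^{\alpha}\,\mathrm{d}B = 0$; once this is in hand the bookkeeping is identical to the classical case. Substituting $\Delta X_j$ and summing, the first-order terms yield $V_x(b\,\mathrm{d}t + \sigma_1\,\mathrm{d}t^{\alpha} + \sigma_2\,\mathrm{d}B)$, the $V_t\,\Delta t_j$ terms yield $V_t\,\mathrm{d}t$, and the only surviving second-order contribution is $\tfrac{1}{2}\sigma_2^{T}V_{xx}\sigma_2(\Delta B_j)^2 \to \tfrac{1}{2}\sigma_2^{T}V_{xx}\sigma_2\,\mathrm{d}t$ through the quadratic variation of $B$. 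Collecting the coefficients of $\mathrm{d}t$, $\mathrm{d}t^{\alpha}$, and $\mathrm{d}B$ reproduces exactly $L_1 V\,\mathrm{d}t + L_2 V\,\mathrm{d}t^{\alpha} + L_3 V\,\mathrm{d}B$.

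For the rigorous passage to the limit I would invoke Theorem~\ref{existenceanduniqueness}, which supplies a $t$-continuous, square-integrable solution, so that $g$ and $\sigma_2(\cdot, x(\cdot))$ are continuous adapted processes carrying the moment bounds needed to send the Riemann sums to their integrals. Continuity of $V_t, V_x, V_{xx}$ controls the remainder, $\sum_j R_j \to 0$ in probability, and the usual It\^o-isometry estimate disposes of $\sum_j \sigma_2^{T}V_{xx}\sigma_2\big((\Delta B_j)^2 - \Delta t_j\big) \to 0$ as well as the convergence of the martingale term.

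The main obstacle is justifying the entries $(\mathrm{d}t^{\alpha})^2 = 0$ and $\mathrm{d}t^{\alpha}\,\mathrm{d}B = 0$, i.e.\ that $Y$ has zero quadratic variation and zero covariation with $B$ for \emph{every} $0 < \alpha < 1$ — precisely the point that restricted earlier treatments to $\alpha > \tfrac{1}{2}$. The difficulty is that $Y$, a Riemann--Liouville fractional integral of the rough process $g$, is only H\"older-$\alpha$ pathwise, so the naive bound on $\sum_j(\Delta Y_j)^2$ diverges once $\alpha \le \tfrac{1}{2}$. Instead I would estimate $\mathbb{E}(\Delta Y_j)^2$ directly after the splitting
\[
\Delta Y_j = \alpha\int_0^{t_j} u^{\alpha-1}\big(g(t_{j+1}-u) - g(t_j-u)\big)\,\mathrm{d}u + \alpha\int_{t_j}^{t_{j+1}} u^{\alpha-1} g(t_{j+1}-u)\,\mathrm{d}u.
\]
The second piece is $O(\Delta t_j)$ in $L^2$ because the kernel singularity sits at $u = 0$, outside the integration window; the first piece is small because the mean-zero, asymptotically independent Brownian increments that drive $g$ cancel when averaged against the fixed kernel $u^{\alpha-1}$. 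A second-moment computation using the Lipschitz bound Eq.~\eqref{lipschitzcond} together with the a priori estimate $\mathbb{E}|x(s+h) - x(s)|^2 = O(h)$ should give $\mathbb{E}(\Delta Y_j)^2 = O(\Delta t_j^{2})$, whence $\sum_j \mathbb{E}(\Delta Y_j)^2 = O(\text{mesh}) \to 0$ and $[Y] \equiv 0$; the covariation $\langle Y, B\rangle \equiv 0$ then follows from Kunita--Watanabe. The one place demanding extra care is a neighbourhood of $t = 0$, where the density $\alpha t^{\alpha-1}$ is singular, but there the contributions remain integrable and are absorbed by the same moment estimates.
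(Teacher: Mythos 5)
First, a point of comparison that matters here: the paper never proves this lemma at all --- it is imported as a black box from \cite{pedjeu2012stochastic} and used only in Section~5. So your attempt is not an alternative to the paper's argument; it must stand entirely on its own, and as written it does not close.

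Your overall strategy (discretize, Taylor-expand, reduce everything to the multiplication table, i.e.\ to showing that $Y(t):=\alpha\int_0^t(t-s)^{\alpha-1}\sigma_1(s,x(s))\,\mathrm{d}s$ has vanishing quadratic variation and vanishing covariation with $B$) is the natural one, and you correctly identify $(\mathrm{d}t^{\alpha})^2=0$ as the step that caused the restriction $\alpha>\tfrac12$ in earlier work. But the two quantitative claims your proof rests on are both false, precisely in the regime $0<\alpha\le\tfrac12$ you want to cover. (i) The a priori estimate $\mathbb{E}|x(s+h)-x(s)|^2=O(h)$ fails: the increment of the fractional term itself satisfies $\mathbb{E}|Y(h)-Y(0)|^2\asymp h^{2\alpha}$ whenever $\sigma_1(0,x_0)\neq 0$ (no cancellation is possible over the first window, since $\alpha\int_0^h(h-r)^{\alpha-1}g(r)\,\mathrm{d}r\approx g(0)h^{\alpha}$), and away from $t=0$ the bound $O(h)$ is only true \emph{conditional on} the very cancellation estimate for $\Delta Y$ you are trying to prove --- the argument is circular. (ii) The target bound $\mathbb{E}(\Delta Y_j)^2=O(\Delta t_j^2)$ is not attainable: already in the simplest instance $g=B$, integration by parts gives $Y(t)=\int_0^t(t-s)^{\alpha}\,\mathrm{d}B_s$, a Riemann--Liouville fractional Brownian motion of Hurst index $\alpha+\tfrac12$, whose increments satisfy $\mathbb{E}(\Delta Y_j)^2\asymp(\Delta t_j)^{2\alpha+1}$. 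That weaker rate would still suffice (since $2\alpha+1>1$, the partition sum vanishes), but it cannot be produced by the computation you describe: feeding the Lipschitz condition Eq.~\eqref{lipschitzcond} and any increment bound for $x$ through the triangle inequality on your splitting yields only
\begin{eqnarray*}
\mathbb{E}(\Delta Y_j)^2 \;\lesssim\; (\Delta t_j)^{2\alpha\wedge 1}\,t_j^{2\alpha},
\end{eqnarray*}
whose sum over the partition diverges like $N^{1-2\alpha}$ for $\alpha<\tfrac12$ and stays bounded away from $0$ even for $\alpha\ge\tfrac12$. The gain from $(\Delta t)^{2\alpha\wedge 1}$ to $(\Delta t)^{2\alpha+1}$ \emph{is} the cancellation you gesture at, and it is not a corollary of Lipschitz continuity plus second moments; it requires representing $\Delta Y_j$ through the Brownian filtration and exploiting orthogonality of increments (It\^o isometry/BDG), which for the self-referential integrand $g(s)=\sigma_1(s,x(s))$ --- whose own increments contain the long-memory fractional part --- is exactly the hard, unproven step. (A smaller issue: Eq.~\eqref{lipschitzcond} is Lipschitz in the spatial variable only, so $|g(t_{j+1}-u)-g(t_j-u)|$ is not controlled by $|x(t_{j+1}-u)-x(t_j-u)|$ without an added time-regularity hypothesis on $\sigma_1$.) A route that does close, and that you may want to pursue instead, is pathwise: by the Hardy--Littlewood theorem, fractional integration of order $\alpha$ improves H\"older regularity by $\alpha$, so a bootstrap starting from $x\in C^{\alpha-\epsilon}$ upgrades $Y$ in finitely many steps to $C^{\alpha+\frac12-\epsilon}$ on $[\delta,T]$; since $\alpha+\tfrac12>\tfrac12$, Young-type estimates then give $[Y]\equiv 0$ and $\langle Y,B\rangle\equiv 0$ with no moment computation at all.
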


By applying the existence and uniqueness Theorem~\ref{existenceanduniqueness} and It\^o's formula, Lemma~\ref{itoformula}, the following fractional Fokker-Planck-Kolmogorov equation is established.
\begin{thm}
	Let $B(t)$ be the $m-$dimensional standard Brownian motion. Suppose that $X(t)$ is the solution to the fractional SDE Eq.~\eqref{fraceq1} whose coefficient functions $b, \sigma_{1}$ and $\sigma_{2}$ satisfy the conditions in Theorem~\ref{existenceanduniqueness}. Then the transition probabilities $P^{X}(t, x) = P^{X}(t, x|0, x_{0})$ of $X(t)$ satisfy the following fractional type differential equation 
	\begin{eqnarray}\label{fracfokkplanck}
		\mathrm{d}P^{X}(t, x) = A_{x}^{*}P^{X}(t, x)\mathrm{d}t + B_{x}^{*}P^{X}(t, x)\mathrm{d}t^{\alpha}
	\end{eqnarray}
	with initial condition $P^{X}(0, x) = \delta_{x_{0}}(x)$, the Dirac delta function with mass on $x_{0}$, and $A^{*}_{x}, B^{*}_{x}$ are spatial operators defined respectively by
	\begin{eqnarray*}
		A^{*}_{x}h(x) = -\sum_{i=1}^{n}\frac{\partial}{\partial x_{i}}[b_{i}(t, x)h(x)] +  \frac{1}{2}\sum_{i=1}^{j=1}\sum_{i=1}^{j=1}\frac{\partial^{2}}{\partial x_{i}\partial x_{j}}[\sum_{k=1}^{m}\delta_{2}^{ik}\delta_{2}^{jk}(t, x)h(x)]
	\end{eqnarray*}
and 
\begin{eqnarray*}
	B_{x}^{*} = -\sum_{i=1}^{n}\frac{\partial}{\partial x_{i}}[\delta_{1}^{i}h(x)],
\end{eqnarray*}
where $b = (b_{1}, \cdots, b_{n})^{T}, \delta_{1} = (\delta_{1}^{1}, \cdots, \delta_{1}^{n})^{T}$ and $\delta_{2}$ is an $n\times m$ matrix with elements $[\delta_{2}]_{ij} = \delta_{2}^{ij}$.
\end{thm}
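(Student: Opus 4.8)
The plan is to derive the forward equation in the classical way, adapted to the fractional setting: apply the It\^o formula of Lemma~\ref{itoformula} to a smooth, compactly supported, time-independent test function, take expectations to annihilate the Brownian martingale term, re-express the surviving expectations as integrals against the transition density $P^{X}$, and finally integrate by parts in space so that the spatial generators act on $P^{X}$, thereby producing their formal adjoints $A_{x}^{*}$ and $B_{x}^{*}$.

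Concretely, first I would fix an arbitrary $f\in C_{c}^{\infty}(\mathbb{R}^{n})$ and regard it as a time-independent $V$ in Lemma~\ref{itoformula}, so that $V_{t}\equiv 0$ and the two deterministic generators reduce to the spatial operators
\begin{eqnarray*}
	A_{x}f(x) := \sum_{i}b_{i}(t,x)\partial_{i}f(x) + \frac{1}{2}\sum_{i,j}\Big(\sum_{k}\sigma_{2}^{ik}\sigma_{2}^{jk}\Big)(t,x)\,\partial_{i}\partial_{j}f(x), \qquad B_{x}f(x) := \sum_{i}\sigma_{1}^{i}(t,x)\partial_{i}f(x),
\end{eqnarray*}
while the remaining term $L_{3}f=f_{x}\sigma_{2}$ multiplies $\mathrm{d}B_{t}$. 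Integrating the It\^o formula over $[0,t]$ and taking expectations---legitimate because the linear growth condition Eq.~\eqref{linearcond}, the moment bound $\mathbb{E}\int_{0}^{T}|X(t)|^{2}\mathrm{d}t<\infty$ from Theorem~\ref{existenceanduniqueness}, and the compact support of $f$ together make $A_{x}f(X(s))$ and $B_{x}f(X(s))$ bounded and the stochastic integral a genuine martingale---yields
\begin{eqnarray*}
	\mathbb{E}[f(X(t))] = f(x_{0}) + \mathbb{E}\int_{0}^{t}A_{x}f(X(s))\mathrm{d}s + \mathbb{E}\int_{0}^{t}B_{x}f(X(s))\mathrm{d}s^{\alpha}.
\end{eqnarray*}

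Next I would write every expectation as an integral against the density, $\mathbb{E}[g(X(s))]=\int g(x)P^{X}(s,x)\mathrm{d}x$, use Fubini to interchange the time and space integrations, and recall the convention $\int_{0}^{t}\psi(s)\mathrm{d}s^{\alpha}=\alpha\int_{0}^{t}(t-s)^{\alpha-1}\psi(s)\mathrm{d}s$ from Eq.~\eqref{fracinteg}. Integration by parts in $x$ then transfers $A_{x}$ and $B_{x}$ onto $P^{X}$: since the boundary terms vanish by the compact support of $f$, one gets $\int(A_{x}f)P^{X}\mathrm{d}x=\int f\,(A_{x}^{*}P^{X})\mathrm{d}x$ and $\int(B_{x}f)P^{X}\mathrm{d}x=\int f\,(B_{x}^{*}P^{X})\mathrm{d}x$, with exactly the adjoints $A_{x}^{*}$ and $B_{x}^{*}$ stated in the theorem. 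Using $f(x_{0})=\int f\,\delta_{x_{0}}\mathrm{d}x$ this produces, for every test function $f$,
\begin{eqnarray*}
	\int f(x)\Bigg[P^{X}(t,x)-\delta_{x_{0}}(x)-\int_{0}^{t}A_{x}^{*}P^{X}(s,x)\mathrm{d}s-\int_{0}^{t}B_{x}^{*}P^{X}(s,x)\mathrm{d}s^{\alpha}\Bigg]\mathrm{d}x = 0.
\end{eqnarray*}
By the fundamental lemma of the calculus of variations the bracket vanishes as a distribution; this is precisely the integrated form of Eq.~\eqref{fracfokkplanck}, and passing to the differential notation recovers the claimed equation together with the initial condition $P^{X}(0,x)=\delta_{x_{0}}(x)$.

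The main obstacle is rigor rather than computation. Because $P^{X}$ emanates from a Dirac mass it need not be a classical smooth function, so the integration-by-parts step and the final passage from the integral identity to Eq.~\eqref{fracfokkplanck} must be read in the weak (distributional) sense; the delicate point is to justify that $A_{x}^{*}P^{X}$ and $B_{x}^{*}P^{X}$ are well defined and that the time behaviour of the Riemann--Liouville-type convolution $\alpha\int_{0}^{t}(t-s)^{\alpha-1}B_{x}^{*}P^{X}\,\mathrm{d}s$ legitimately corresponds to the symbolic $\mathrm{d}t^{\alpha}$ term---consistently with the interpretation of $\mathrm{d}s^{\alpha}$ underlying Lemma~\ref{itoformula} and the references~\cite{jumarie2005solution,pedjeu2012stochastic}. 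I would therefore keep all statements in duality against $C_{c}^{\infty}$ test functions until the very last step, invoking the growth and moment bounds only to license the exchange of expectation and integration.
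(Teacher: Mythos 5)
Your proposal follows essentially the same route as the paper's proof: apply the It\^o formula of Lemma~\ref{itoformula} to a test function $f\in C_{c}^{\infty}(\mathbb{R}^{n})$, rewrite $\mathrm{d}s^{\alpha}$ as $\alpha(t-s)^{\alpha-1}\mathrm{d}s$, take expectations to kill the martingale term, express the result as integrals against $P^{X}$ via Fubini, integrate by parts to produce the adjoint operators, and use the arbitrariness of $f$ (density of $C_{c}^{\infty}$) to pass to the integrated form of Eq.~\eqref{fracfokkplanck} before differentiating in $t$. Your added attention to the distributional interpretation and to justifying the martingale property via the growth and moment bounds is a refinement of, not a departure from, the paper's argument.
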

\begin{proof}
Let $f\in C_{c}^{\infty}(R^{n})$, i.e. $f$ is an infinitely differential function on $R^{n}$ with compact support. Since $X(t)$ is the solution of the stochastic fractional differential equation Eq.~\eqref{fraceq1}, this means $X(t)$ satisfies the stochastic integral equation Eq.~\eqref{fraceq14}. So apply It\^o formula Lemma~\ref{itoformula} on $f(X(t))$ to yield
\begin{eqnarray}\label{itoequation}
\begin{aligned}
f(X(t)) - f(x_{0}) &= \int_{0}^{t}\bigg(f_{x}(X(s))b(s, X(s)) + \frac{1}{2}\sigma_{2}^{T}(s, X(s))f_{xx}(X(s))\sigma_{2}(s, X(s))\bigg)\mathrm{d}s\\
&+ \int_{0}^{t}f_{x}(X(s))\sigma_{1}(s, X(s))\mathrm{d}s^{\alpha} + \int_{0}^{t}f_{x}(X(s))\sigma_{2}(s, X(s))\mathrm{d}B_{s}.
\end{aligned}
\end{eqnarray}
Notice the fact that
\begin{eqnarray*}
	\int_{0}^{t}f_{x}(X(s))\sigma_{1}(s, X(s))\mathrm{d}s^{\alpha} = \alpha\int_{0}^{t}(t - s)^{\alpha - 1}f_{x}(X(s))\sigma_{1}(s, X(s))\mathrm{d}s,
\end{eqnarray*}
and more details on this equality can be found in \cite{jumarie2005solution, pedjeu2012stochastic}. Thus Eq.~\eqref{itoequation} can be written as
\begin{eqnarray}\label{itoequation1}
\begin{aligned}
f(X(t)) - f(x_{0}) &= \int_{0}^{t}\bigg(f_{x}(X(s))b(s, X(s)) + \frac{1}{2}\sigma_{2}^{T}(s, X(s))f_{xx}(X(s))\sigma_{2}(s, X(s))\bigg)\mathrm{d}s\\
&+ \alpha\int_{0}^{t}(t - s)^{\alpha - 1}f_{x}(X(s))\sigma_{1}(s, X(s))\mathrm{d}s + \int_{0}^{t}f_{x}(X(s))\sigma_{2}(s, X(s))\mathrm{d}B_{s}.
\end{aligned}
\end{eqnarray}
Since the integral $\int_{0}^{t}f_{x}(X(s))\sigma_{2}(s, X(s))\mathrm{d}B_{s}$ is a martingale with respect to the filtration $\mathcal{F}_{t}$, take conditional expectations on both sides of Eq.~\eqref{itoequation1} to obtain
\begin{eqnarray}\label{conditionalexpectation}
\begin{aligned}
\mathbb{E}[f(X(t))|X(0) = x_{0}] - f(x_{0}) &= \mathbb{E}\bigg[\int_{0}^{t}f_{x}(X(s))b(s, X(s))\mathrm{d}s\bigg|X(0) = x_{0}\bigg]\\ &+ \mathbb{E}\bigg[\frac{1}{2}\int_{0}^{t}\sigma_{2}^{T}(s, X(s))f_{xx}(X(s))\sigma_{2}(s, X(s))\mathrm{d}s\bigg|X(0) = x_{0}\bigg]\\
&+ \mathbb{E}\bigg[\alpha\int_{0}^{t}(t - s)^{\alpha - 1}f_{x}(X(s))\sigma_{1}(s, X(s))\mathrm{d}s\bigg|X(0) = x_{0}\bigg].
\end{aligned}	
\end{eqnarray}
By Fubini's Theorem and integration by parts, the above Eq.~\eqref{conditionalexpectation} can be rewritten as
\begin{eqnarray*}
	\begin{aligned}
		\int_{R^{n}}f(x)P^{X}(t, x)\mathrm{d}x - f(x_{0}) 
		&= \int_{0}^{t}\int_{R^{n}}f_{x}(x)b(s, x)P^{X}(s, x)\mathrm{d}x\mathrm{d}s\\
		&+ \frac{1}{2}\int_{0}^{t}\int_{R^{n}}\sigma_{2}^{T}(s, x)f_{xx}(x)\sigma_{2}(s, x)P^{X}(s, x)\mathrm{d}x\mathrm{d}s\\ 
		&+ \alpha\int_{0}^{t}\int_{R^{n}}(t - s)^{\alpha - 1}f_{x}(x)\sigma_{1}(s, x)P^{X}(s, x)\mathrm{d}x\mathrm{d}s\\
		&= \int_{R^{n}}f(x)\int_{0}^{t}A^{*}_{1}P^{X}(s, x)\mathrm{d}s\mathrm{d}x\\ 
		&+ \int_{R^{n}}f(x)\alpha\int_{0}^{t}(t - s)^{\alpha - 1}A^{*}_{2}P^{X}(s, x)\mathrm{d}s\mathrm{d}x.
	\end{aligned}
\end{eqnarray*}
Since $f\in C_{c}^{\infty}(R^{n})$ is arbitrary and $ C_{c}^{\infty}(R^{n})$ is dense in $L^{2}(R^{n})$, 
\begin{eqnarray}\label{integralequationoffokkerplanckeq}
	P^{X}(t, x) - \delta_{x_{0}}(x) = \int_{0}^{t}A^{*}_{1}P^{X}(s, x)\mathrm{d}s + \alpha\int_{0}^{t}(t - s)^{\alpha - 1}A^{*}_{2}P^{X}(s, x)\mathrm{d}s,
\end{eqnarray}
where $\delta_{x_{0}}(x)$ is a generalized function taking value $\delta_{x_{0}}(x) = P^{X}(0,x_{0})$. Finally, take the derivative with respect to time $t$ on both sides of Eq.~\eqref{integralequationoffokkerplanckeq} to yield the desired result Eq.~\eqref{fracfokkplanck}.
\end{proof}
\section{Conclusion}
In this paper, a new type of Gronwall-Bellman inequality is established for a class of integral equations with a mixture of nonsingular and singular integrals. This new type of Gronwall-Bellman inequality can be considered as a generalization of known Gronwall-Bellman inequalities dealing with an integral equation having nonsingular or singular integrals, separately. With this new type of Gronwall-Bellman inequality, existence and uniqueness of the solution to a fractional SDE with fractional oder $0 < \alpha < 1$ is investigated. Furthermore, based on the existence and uniqueness result, a fractional type Fokker-Planck-Kolmogorov equation associated to the solution of a fractional SDE is derived.
\section{Acknowledgments}
The author wishes to thank Dr. Marjorie Hahn for her advice, fruitful discussion, encouragement and patience with my research, and Dr. Xiaozhe Hu for his helpful discussion. 

\section*{References}
\bibliography{mybibfile}

\end{document}